\newtheorem{theorem}{Theorem}
\newtheorem{lemma}[theorem]{Lemma}
\newtheorem{corollary}[theorem]{Corollary}
\newtheorem{remark}[theorem]{Remark}
\newcommand{\eq}{:=}
\newcommand{\grad}{\boldsymbol \nabla}
\renewcommand{\div}{\grad \cdot}
\newcommand{\curl}{\grad \times}
\newcommand{\ccurl}{\boldsymbol{\operatorname{curl}}}
\newcommand{\ddiv}{\operatorname{div}}
\newcommand{\BD}{\boldsymbol D}
\newcommand{\BH}{\boldsymbol H}
\newcommand{\BI}{\boldsymbol I}
\newcommand{\BJ}{\boldsymbol J}
\newcommand{\BL}{\boldsymbol L}
\newcommand{\BN}{\boldsymbol N}
\newcommand{\BP}{\boldsymbol P}
\newcommand{\BW}{\boldsymbol W}
\newcommand{\be}{\boldsymbol e}
\newcommand{\bg}{\boldsymbol g}
\newcommand{\bn}{\boldsymbol n}
\newcommand{\bo}{\boldsymbol o}
\newcommand{\br}{\boldsymbol r}
\newcommand{\bu}{\boldsymbol u}
\newcommand{\bv}{\boldsymbol v}
\newcommand{\bw}{\boldsymbol w}
\newcommand{\bx}{\boldsymbol x}
\newcommand{\by}{\boldsymbol y}
\newcommand{\CJ}{\mathcal J}
\newcommand{\CT}{\mathcal T}
\newcommand{\LC}{\mathscr C}
\newcommand{\LF}{\mathscr F}
\newcommand{\LP}{\mathscr P}
\newcommand{\eps}{\varepsilon}
\newcommand{\ee}{{\boldsymbol \eps}}
\newcommand{\mm}{{\boldsymbol \mu}}
\newcommand{\cc}{{\boldsymbol \chi}}
\newcommand{\zz}{{\boldsymbol \zeta}}
\newcommand{\norm}[1]{|\!|\!|#1|\!|\!|}
\newcommand{\enorm}[1]{\norm{#1}_{\ccurl,\omega,\Omega}}
\newcommand{\bphi}{\boldsymbol \phi}
\newcommand{\cI}{\LC_{{\rm i},p}}
\newcommand{\cD}{\LC_{{\rm shift},p}}
\newcommand{\cSE}{c_{\rm s}}
\newcommand{\gbaE}{\gamma}
\newcommand{\pp}{\boldsymbol \varphi}
\newcommand{\al}{\boldsymbol \alpha}
\title%
[Approximability estimates for Maxwell's equations]%
{Frequency-explicit approximability estimates for time-harmonic Maxwell's equations}
\author{T. Chaumont-Frelet$^{\star,\dagger}$}
\author{P. Vega$^{\star,\dagger}$}
\address{\vspace{-.5cm}}
\address{\noindent \tiny \textup{$^\star$Inria, 2004 Route des Lucioles, 06902 Valbonne, France}}
\address{\noindent \tiny \textup{$^\dagger$Laboratoire J.A. Dieudonn\'e, Parc Valrose, 28 Avenue Valrose, 06108 Nice Cedex 02, 06000 Nice, France}}
\title[Approximability estimates for Maxwell's equations]{Frequency-explicit approximability estimates for time-harmonic Maxwell's equations}
\begin{document}

\begin{abstract}
We consider time-harmonic Maxwell's equations set in a heterogeneous medium
with perfectly conducting boundary conditions. Given a divergence-free right-hand
side lying in $L^2$, we provide a frequency-explicit approximability estimate measuring
the difference between the corresponding solution and its best approximation by high-order
N\'ed\'elec
finite elements. Such an approximability estimate is crucial in both the a priori and
a posteriori error analysis of finite element discretizations of Maxwell's equations,
but the derivation is not trivial. Indeed, it is hard to take advantage of high-order
polynomials given that the right-hand side only exhibits $L^2$ regularity. We proceed in
line with previously obtained results for the simpler setting of the scalar Helmholtz equation
and propose a regularity splitting of the solution. In turn, this splitting yields sharp
approximability estimates generalizing known results for the scalar Helmholtz equation and
showing the interest of high-order methods.

\vspace{.5cm}
\noindent
{\sc Key words.}
Maxwell's equations,
Finite element methods,
High-order methods,
Pollution effect
\end{abstract}

\maketitle

\section{Introduction}

Over the past decades, considerable efforts have been devoted to analyze
the stability and convergence of finite element discretizations of high-frequency wave
propagation problems. This is in part because the required mathematical analysis
is rich and elegant, but also due to the large number of physical and industrial
applications for which these problems are relevant.

Except in the low-frequency regime, the bilinear (or sesquilinear) forms associated
with time-harmonic wave propagation problems are not coercive. As a consequence, finite
element schemes become unstable when the frequency is high and/or close to a resonant
frequency, unless heavily refined meshes or high-order elements are employed
\cite{ihlenburg_babuska_1997a}. For scalar wave propagation problems modeled by the Helmholtz
equation, it has become clear that high-order elements are very well-suited
to address these stability issues. On the one hand, the interest of high-order elements
has been numerically noted in a number of works
\cite{beriot_prinn_gabard_2016a,taus_zepedanunez_hewett_demanet_2017a}.
On the other hand, thanks to dedicated duality techniques, the stability and convergence
theory is now well-understood
\cite{chaumontfrelet_nicaise_2019a,%
ihlenburg_babuska_1997a,%
melenk_sauter_2011a},
and is in line with numerical observations.
Vectorial problems are less covered in the literature, but the few available results
point towards the fact that the analysis techniques employed for the Helmholtz
equation as well as the key conclusions can be extended
\cite{chaumontfrelet_2019a,melenk_sauter_2020a}.
We also mention \cite{%
chaumontfrelet_nicaise_pardo_2018a,%
ern_guermond_2018a,%
zhong_shu_wittum_xu_2009a},
where similar duality techniques are used for vectorial wave propagation problems,
without focusing on the high-frequency regime though.

Here, we consider time-harmonic Maxwell's equations
\begin{equation}
\label{eq_maxwell_strong}
\left \{
\begin{array}{rcll}
-\omega^2 \ee \be + \curl \left (\mm^{-1} \curl \be\right )
&=&
\omega \ee \bg
&
\text{ in } \Omega
\\
\be \times \bn &=& \boldsymbol 0 & \text{ on } \partial \Omega
\end{array}
\right .
\end{equation}
in a smooth domain $\Omega$ with piecewise smooth permittivity and permeability
(real-valued, symmetric) tensors $\ee$ and $\mm$. In \eqref{eq_maxwell_strong},
$\omega > 0$ is the frequency, $\be: \Omega \to \mathbb R^3$ is the unknown
and $\bg: \Omega \to \mathbb R^3$ is a given right-hand side.
In practice, the right-hand side takes the form $\bg = i\ee^{-1} \BJ$
where $\BJ: \Omega \to \mathbb C^3$ is a ``current density'', and $\be$
represents the electric fields \cite{assous_ciarlet_labrunie_2018a}. Here,
we choose to work with $\bg$ instead of $\BJ$ since it is more relevant
mathematically as it naturally appears in convergence analysis by duality.

For a finite element space $\BW_h$, we define the ``approximation factor'' as the
sharpest constant $\gamma$ such that the estimate
\begin{equation}
\label{eq_approximation_gamma}
\inf_{\bv_h \in \BW_h} \enorm{\be-\bv_h} \leq \gamma \|\bg\|_{\ee,\Omega}
\end{equation}
holds for all $\bg \in \BL^2(\Omega)$ with $\div(\ee\bg) = 0$, where
$\|\cdot\|_{\ee,\Omega}$ is the $\ee$-weighted $\BL^2(\Omega)$ norm and
$\enorm{\cdot}$ is a suitable ``energy'' norm
(see Section \ref{section_functional_spaces}). The approximation factor
$\gamma$ quantifies the ability of the finite element space $\BW_h$ to reproduce
solutions to \eqref{eq_maxwell_strong}. Actually, this quantity is central
in the stability analysis of the finite element schemes, since it can be shown that
the finite element solution is quasi-optimal if and only if
$\gamma$ is ``sufficiently small''
\cite{chaumontfrelet_2019a,chaumontfrelet_nicaise_2019a,melenk_sauter_2011a}.
The approximation factor also plays a central role in a posteriori
error estimation \cite{chaumontfrelet_ern_vohralik_2021a,sauter_zech_2015a}.

Since the norm in the right-hand side of \eqref{eq_approximation_gamma}
is weak, one cannot expect a high regularity for the solution $\be$. As
a result, taking advantage of high-order polynomials is a subtle task:
the solution is only piecewise $H^2$ in general, so that the optimal approximation
rate is of order $h$, and not $h^p$. One key idea to overcome this issue is to introduce
a regularity splitting as initially done in \cite{melenk_sauter_2011a} for scalar wave
propagation in homogeneous media and later extended to heterogeneous media
\cite{chaumontfrelet_nicaise_2019a,lafontaine_spence_wunsch_2020a}
and Maxwell's equations in homogeneous media \cite{melenk_sauter_2020a}.

In this work, we apply the idea of \cite{chaumontfrelet_nicaise_2019a}
to obtain a regularity splitting for Maxwell's equations in heterogeneous
media. Our key result in Corollary \ref{corollary_approximability}
is that if $\BW_h$ is the N\'ed\'elec finite element space of order $p \geq 0$
on a shape-regular mesh $\CT_h$ with maximal element size $h$, there exist
positive constants $c$ and $C$ independent of $\omega$ and $h$
such that if $\omega h/\vartheta_\Omega \leq c$, then
\begin{equation}
\label{eq_upper_bound_gamma}
\gamma \leq C
\left (
\frac{\omega h}{\vartheta_\Omega}
+
\frac{\omega}{\delta}
\left (
\frac{\omega h}{\vartheta_\Omega}
\right )^{p+1}
\right )
\end{equation}
where $\delta$ is the distance between $\omega$ and the closest resonant
frequency (see Section \ref{section_eigen}), and $\vartheta_\Omega$ is the smallest wavespeed in $\Omega$.

Since $N_\lambda \eq (\omega h/\vartheta_\Omega)^{-1}$ is a measure of
the number of mesh elements per wavelength, one sees from \eqref{eq_upper_bound_gamma}
that $\gamma$ stays small as long as $N_\lambda \geq C (1 + (\omega/\delta)^{1/(p+1)})$
with a constant $C$ independent of $\omega$ and $h$. As a result, while the 
number of elements per wavelength must be increased to achieved stability when
the frequency is high ($\omega d_\Omega/\vartheta_\Omega \gg 1$, $d_\Omega$ being
the diameter of $\Omega$) or almost resonant ($\delta \ll 1$), the requirement is
less demanding for high-order elements.

We close this introduction with two comments. (i)
The authors largely expect the upper bound in \eqref{eq_upper_bound_gamma} is sharp,
as the same estimate is valid and sharp in the simpler setting of the Helmholtz
equation \cite{chaumontfrelet_nicaise_2019a}. (ii) The constants $c$ and $C$ in
\eqref{eq_upper_bound_gamma} are allowed to depend on $p$, which is an important
limitation. Unfortunately, the authors do not believe that $p$-explicit results
can be obtained in the present setting. Indeed, it appears that $p$-explicit
approximability requires substantially more involved arguments that are not
available in heterogeneous media so far \cite{melenk_sauter_2011a,melenk_sauter_2020a}.

The remaining of this work is organized as follows. Section \ref{section_settings}
presents the notation and recalls key preliminary results. In Section
\ref{section_stability}, we present some initial results concerning the stability
of the problem and basic regularity results. We elaborate a regularity
splitting in Section \ref{section_splitting} that we subsequently apply
to derive our approximability result in Section \ref{section_approximability},
leading to estimate \eqref{eq_upper_bound_gamma}.

\section{Settings}
\label{section_settings}

\subsection{Domain and coefficients}

We consider a simply connected domain $\Omega \subset \mathbb R^3$ with an analytic boundary
$\partial \Omega$. $\Omega$ is partitioned into a set $\LP$ of non-overlapping subdomains
$P$ with analytic boundaries $\partial P$ such that
$\overline{\Omega} = \sup_{P \in \LP} \overline{P}$. The notation
$d_\Omega \eq \max_{\bx,\by \in \overline{\Omega}} |\bx-\by|$ stands
for the diameter of $\Omega$.

$\ee$ and $\mm$ are two real symmetric tensor-valued functions defined over $\Omega$.
These coefficients are assumed to be piecewise smooth in the sense that for
each $P \in \LP$ and for $1 \leq j,\ell \leq 3$, $\ee_{j\ell}|_P$ and
$\mm_{j\ell}|_P$ are analytic functions. The notations
$\zz \eq \ee^{-1}$ and $\cc \eq \mm^{-1}$ will be useful in the sequel.

We denote by $\eps_{\min},\eps_{\max}: \Omega \to \mathbb R$ the (analytic) functions
mapping to each $\bx \in \Omega$ the smallest and largest eigenvalue of $\ee(\bx)$,
and we assume that $\ee$ is uniformly bounded and elliptic, which means that
\begin{equation*}
0 < \inf_{\Omega} \eps_{\min}, \qquad \sup_{\Omega} \eps_{\max} < +\infty.
\end{equation*}
We employ similar notations for $\mm$, $\cc$ and $\zz$, and assume that $\mm$
is uniformly bounded and elliptic. Finally, we denote by
\begin{equation*}
\vartheta_\Omega
\eq
\inf_\Omega \frac{1}{\sqrt{\eps_{\max}\mu_{\max}}}
\end{equation*}
the smallest wavespeed in $\Omega$.

\subsection{Functional spaces}
\label{section_functional_spaces}

If $D \subset \Omega$ is an open set, $L^2(D)$ is the usual Lebesgue-space of
real-valued square-integrable functions over $D$.
In addition, we write $\BL^2(D) \eq \left (L^2(D)\right )^3$ for vector-valued functions.
The natural inner products and norms of both these spaces are $(\cdot,\cdot)_D$ and $\|\cdot\|_D$,
and we drop the subscript when $D = \Omega$. If $\pp$ is a measurable uniformly bounded and
elliptic symmetric tensor-valued function, we also employ the (equivalent) norm
\begin{equation*}
\|\bv\|^2_{\pp,D} \eq \int_D \pp \bv \cdot \bv
\end{equation*}
for $\bv \in \BL^2(D)$.

$\BH(\ccurl,\Omega)$ is the Sobolev space of vector-valued functions
$\bv \in \BL^2(\Omega)$ such that $\curl \bv \in \BL^2(\Omega)$. It is
equipped with the ``energy'' norm
\begin{equation*}
\enorm{\bv}^2 \eq \omega^2 \|\bv\|_{\ee,\Omega}^2 + \|\curl \bv\|_{\cc,\Omega}^2
\qquad
\forall \bv \in \BH(\ccurl,\Omega).
\end{equation*}
$\BH_0(\ccurl,\Omega)$ is the closure of smooth compactly supported functions into
$\BH(\ccurl,\Omega)$ and contains vector-valued functions with vanishing tangential traces.

If $\pp$ is a measurable, uniformly elliptic and bounded tensor-valued function,
$\BH(\ddiv^0,\pp,\Omega)$ is the space of functions $\bv \in \BL^2(\Omega)$ such that
$\div(\pp\bv) = 0$ in $\Omega$. We simply write $\BH(\ddiv^0,\Omega)$ when
$\pp \eq \BI$ is the identity tensor. Besides, the space $\BH_0(\ddiv^0,\pp,\Omega)$
is the closure of smooth compactly supported functions in $\BH(\ddiv^0,\pp,\Omega)$
and contains functions with vanishing normal traces.

If $m \geq 0$, the space $\BH^m(\LP)$ contains those functions $\bv \in \BL^2(\Omega)$ such
that for each $P \in \LP$, $1 \leq \ell \leq 3$ and all multi-indices
$\al \in \mathbb N^3$ with $|\al| \leq m$, we have $\partial^\alpha (\bv_\ell|_P) \in L^2(P)$.
We equip this space with the norms
\begin{equation*}
\|\bv\|_{\ee,\BH^m(\LP)}^2
\eq
\|\bv\|_{\ee,\Omega}^2
+
\sum_{n=1}^m \sum_{|\al| = n} \sum_{P \in \LP} \sum_{\ell=1}^3
d_\Omega^{2n} \int_P \eps_{\max} |\partial^{\al} (\bv_\ell|_P)|^2
\end{equation*}
and
\begin{equation*}
\|\bv\|_{\cc,\BH^m(\LP)}^2
\eq
\|\bv\|_{\cc,\Omega}^2
+
\sum_{n=1}^m \sum_{|\al| = n} \sum_{P \in \LP} \sum_{\ell=1}^3
d_\Omega^{2n} \int_P \chi_{\max} |\partial^{\al} (\bv_\ell|_P)|^2.
\end{equation*}

We refer the reader to \cite{adams_fournier_2003a} for a detailed exposition
concerning Lesbegue and high-order Sobolev spaces, and to \cite{fernandes_gilardi_1997a}
and \cite{girault_raviart_1986a} for Sobolev spaces involving the curl and
divergence of vector fields.

\subsection{Eigenpairs}
\label{section_eigen}

Recalling that $\Omega$ is simply connected, it follows
from \cite[Remark 7.5]{fernandes_gilardi_1997a} that the application
\begin{equation*}
\BH_0(\ccurl,\Omega) \ni \bv \to \|\curl \bv\|_{\cc,\Omega}
\end{equation*}
is a norm on $\BH_0(\ccurl,\Omega) \cap \BH(\ddiv^0,\ee,\Omega)$.
Besides, the injection
$\BH_0(\ccurl,\Omega) \cap \BH(\ddiv^0,\ee,\Omega) \subset \BH(\ddiv^0,\ee,\Omega)$
is compact \cite{weber_1980a}. As a result
(see, e.g. \cite[Theorem 4.5.11]{assous_ciarlet_labrunie_2018a}),
there exists an orthonormal basis
$\{\bphi_j\}_{j \geq 0}$ of $\BH(\ddiv^0,\ee,\Omega)$
(equipped with the inner-product $(\ee\ \!\cdot,\cdot)$) and a sequence of strictly
positive eigenvalues $\{\lambda_j\}_{j \geq 0}$ such that for all $j \geq 0$,
\begin{equation*}
(\cc\curl \bphi_j,\curl \bv) = \lambda_j (\ee\bphi_j\bv) \qquad \forall \bv \in \BH_0(\ccurl,\Omega).
\end{equation*}
In addition, if $\bv \in \BH_0(\ccurl,\Omega) \cap \BH(\ddiv^0,\ee,\Omega)$, we have
\begin{equation*}
\|\bv\|_{\ee,\Omega}^2 = \sum_{j \geq 0} |v_j|^2
\quad
\text{ and }
\quad
\|\curl \bv\|_{\cc,\Omega}^2 = \sum_{j \geq 0} \lambda_j |v_j|^2,
\end{equation*}
where $v_j \eq (\ee\bv,\bphi_j)$.

In the remaining of this work, we set $\delta \eq \min_{j \geq 0}|\sqrt{\lambda_j} - \omega|$
and assume $\delta > 0$.

\begin{remark}[What can be said about $\delta$?]
In practice, it is complicated to obtain a bound for $\delta$ analytically
because it requires information about the localization of the spectrum.
In the high-frequency regime, $\delta$ will, in general, tend to be smaller
due to Weyl's law \cite[Theorem 6.8]{zworski_2012a}. Alternatively, in the low-frequency
regime where $0 < \omega < \sqrt{\lambda_0}$, a lower bound for $\delta$ may be computed from
a lower bound on $\lambda_0$, see \cite{gallistl_olkhovskiy_2021}.
\end{remark}

\subsection{Regularity shifts}
\label{section_shift}

Our analysis heavily relies on regularity shift results where, given
a divergence-free vector field with a smooth curl, one deduces smoothness
results for the field itself. First \cite[Theorem 2.2]{weber_1981a},
for all $p \geq 0$, there exists a constant $\cD$ only depending on $p$, $\LP$, $\ee$ and $\mm$
such that, for $0 \leq m \leq p$, if $\bv \in \BH_0(\ccurl,\Omega) \cap \BH(\ddiv^0,\ee,\Omega)$
with $\curl \bv \in \BH^m(\LP)$, we have
\begin{equation}
\label{eq_shift_ee}
\|\bv\|_{\ee,\BH^{m+1}(\LP)}
\leq
\cD \frac{d_\Omega}{\vartheta_\Omega} \|\curl \bv\|_{\cc,\BH^m(\LP)}.
\end{equation}
Similarly, if $\bw \in \BH_0(\ddiv^0,\Omega)$ with $\curl(\cc\bw) \in \BH^m(\LP)$, then we have
\begin{equation}
\label{eq_shift_cc}
\|\bw\|_{\cc,\BH^{m+1}(\LP)}
\leq
\cD \frac{d_\Omega}{\vartheta_\Omega} \|\zz\curl (\cc\bw)\|_{\ee,\BH^m(\LP)},
\end{equation}
as can be seen by applying \cite[Theorem 2.2]{weber_1981a} to
$\cc \bw \in \BH(\ccurl,\Omega) \cap \BH_0(\ddiv^0,\mm,\Omega)$.
We also record the following result obtained by combining
\eqref{eq_shift_ee} and \eqref{eq_shift_cc}: if
$\bu \in \BH_0(\ccurl,\Omega) \cap \BH(\ddiv^0,\ee,\Omega)$ satisfies
$\curl(\cc\curl \bu) \in \BH^m(\LP)$, we have
\begin{equation}
\label{eq_double_shift}
\|\bu\|_{\ee,\BH^{m+2}(\LP)}
\leq
\left (\cD \frac{d_\Omega}{\vartheta_\Omega} \right )^2
\|\zz\curl (\cc\curl \bu)\|_{\ee,\BH^m(\LP)}
\end{equation}
for $0 \leq m \leq p-1$.

\begin{remark}[Smoothness assumption]
For the sake of simplicity, we assume that the coefficients are piecewise
analytic, which allow for regularity shifts for any $m \in \mathbb N$. In
turn, this allows us to establish our key results for any polynomial degree
$p \in \mathbb N$. On the other hand, for a fixed polynomial degree $p_\star \in \mathbb N$,
these smoothness assumptions can be weakened by simply requiring piecewise finite
regularity of the coefficients.
\end{remark}

\subsection{Curved tetrahedral mesh}

We consider a partition of $\Omega$ into a conforming mesh $\CT_h$ of
(curved) tetrahedral elements $K$ as in \cite[Assumption 3.1]{melenk_sauter_2020a}.
For $K \in \CT_h$ we denote by $\LF_K: \widehat K \to K$ the (analytic) mapping
between the reference tetrahedra $\widehat K$ and the element $K$. We further
assume that the mesh $\CT_h$ is conforming with the partition $\LP$, which
means that for each $K \in \CT_h$, there exists a unique $P \in \LP$ such that
$K \subset \overline{P}$. This last assumption means that the coefficients are
smooth inside each mesh cell.

\subsection{N\'ed\'elec finite element space}

In the remaining of this work, we fix a polynomial degree $p \geq 0$.
Then, following \cite[Chapter 15]{ern_guermond_2021a}, we introduce the N\'ed\'elec
polynomial space
\begin{equation*}
\BN_p(\widehat K) = \BP_p(\widehat K) + \bx \times \BP_p(\widehat K),
\end{equation*}
where $\BP_p(\widehat K) \eq \left (P_p(\widehat K)\right )^3$ and $P_p(\widehat K)$ stands for the
space of polynomials of degree less than or equal to $p$ defined over $\widehat K$. Classically,
the associate approximation space is obtained by mapping the N\'ed\'elec polynomial space to the
mesh cells through a Piola mapping, leading to
\begin{equation*}
\BW_h \eq
\left \{
\bv_h \in \BH_0(\ccurl,\Omega)
\quad \big | \quad
\left (\BD\LF_K^{-1}\right ) \left (\bv_h|_K \circ \LF_K^{-1} \right ) \in \BN_p(\widehat K)
\quad
\forall K \in \CT_h
\right \},
\end{equation*}
where $\BD\LF_K^{-1}$ is the Jacobian matrix of $\LF_K^{-1}$.

\subsection{High-order interpolation}

There exists an interpolation operator $\CJ_h: \BH^1(\LP) \cap \BH_0(\ccurl,\Omega) \to \BW_h$
and a constant $\cI$ solely depending on $p$, the regularity of the mesh and the
coefficients $\ee$ and $\mm$ such that
\begin{subequations}
\label{eq_interpolation}
\begin{equation}
\|\bv - \CJ_h \bv\|_{\ee,\Omega}
\leq
\cI \left (\frac{h}{d_\Omega}\right )^{p+1} \|\bv\|_{\ee,\BH^{p+1}(\LP)},
\end{equation}
whenever $\bv \in \BH^1(\LP)$ satisfies $\bv \in \BH^{p+1}(\LP)$ and
\begin{equation}
\|\curl(\bw - \CJ_h \bw)\|_{\cc,\Omega}
\leq
\cI \left (\frac{h}{d_\Omega}\right )^{p+1} \|\curl \bw\|_{\cc,\BH^{p+1}(\LP)}
\end{equation}
\end{subequations}
for all $\bw \in \BH_0(\ccurl,\Omega) \cap \BH^{1}(\LP)$ with $\curl \bw \in \BH^{p+1}(\LP)$.
The construction of such an interpolation operator is classical, and we refer the reader
to \cite[Chapters 13 and 17]{ern_guermond_2021a} and \cite[Chapter 8]{melenk_sauter_2020a}
for more details.

\begin{remark}[$p$-explicit interpolation estimates]
It is possible to obtain ``$p$-explicit'' versions of the estimates in \eqref{eq_interpolation},
with a constant $\cI$ independent of the polynomial degree. In our case, such estimates are not
useful because the dependency of $\cD$ on $p$ is unknown (or at least, not practically useful).
\end{remark}

\subsection{Sharp approximability estimates}

We are now ready to rigorously introduce the approximation factor $\gamma$.
Given $\bg \in \BL^2(\Omega)$, we denote by $\be^\star(\bg)$ the unique
element of $\BH_0(\ccurl,\Omega)$ such that
\begin{equation}
\label{eq_maxwell_weak}
-\omega^2(\ee\bv,\be^\star(\bg)) + (\cc\curl\bv,\curl \be^\star(\bg))
=
\omega(\ee\bv,\bg)
\end{equation}
for all $\bv \in \BH_0(\ccurl,\Omega)$. Note that the existence and uniqueness
of $\be^{\star}(\bg)$ follows from the assumption that $\delta > 0$, i.e. $\omega$ is not
a resonant frequency. Then, we introduce the approximation factor as
\begin{equation}
\label{eq_gamma}
\gamma \eq \sup_{\substack{
\bg \in \BH(\ddiv^0,\ee,\Omega)
\\
\|\bg\|_{\ee,\Omega} = 1
}}
\inf_{\bv_h \in \BW_h}
\enorm{\be^\star(\bg)-\bv_h}.
\end{equation}
The constant $\gamma$ plays a crucial role in showing the stability
of finite element discretizations, as detailed in \cite[\S2.2]{chaumontfrelet_nicaise_2019a}
for the Helmholtz equation. It appears in the context of a duality technique
often called the ``Schatz argument'', whereby $\bg$ is taken to be the finite element
error, and the function $\be^\star(\bg)$ is used to compensate for the negative $L^2$-term
of the bilinear form \cite{schatz_1974a}. A variation of the Schatz argument is also employed
in a posteriori error analysis \cite{chaumontfrelet_ern_vohralik_2021a,dorfler_sauter_2013a}.

Observing that we can choose $\bv_h = \bo$ in the infimum, a crude estimate
for the approximation factor is given by $\gamma \leq \cSE$ where
\begin{equation}
\label{eq_def_cSE}
\cSE \eq \sup_{\substack{
\bg \in \BH(\ddiv^0,\ee,\Omega)
\\
\|\bg\|_{\ee,\Omega} = 1
}}
\enorm{\be^\star(\bg)}.
\end{equation}
This upper bound is of little use in a priori error estimation where one needs
$\gamma$ to become small as $h \to 0$ in a duality argument
\cite{%
chaumontfrelet_2019a,%
chaumontfrelet_nicaise_2019a,%
chaumontfrelet_nicaise_pardo_2018a,%
ern_guermond_2018a,%
zhong_shu_wittum_xu_2009a}.
On the other hand, it is of interest in a posteriori error estimation, in particular, to obtain
guaranteed estimates \cite{chaumontfrelet_ern_vohralik_2021a}. Indeed, the constant
$\cSE$ is often easier to compute than sharper estimates since it only depends
on the frequency, the domain and the coefficients, and not on the mesh or the
discretization order.

\section{Stability}
\label{section_stability}

Here, we present a stability result, that follows from standard spectral theory.

\begin{theorem}[Stability]
\label{theorem_stability}
The estimates
\begin{equation}
\label{eq_stability_estimates}
\omega \|\be^\star(\bg)\|_{\ee,\Omega} \leq \frac{\omega}{\delta} \|\bg\|_{\ee,\Omega},
\qquad
\|\curl \be^\star(\bg)\|_{\cc,\Omega} \leq \frac{\omega}{\delta} \|\bg\|_{\ee,\Omega}
\end{equation}
hold true for all $\bg \in \BH(\ddiv^0,\ee,\Omega)$. In addition, we have
\begin{equation}
\label{eq_estimate_cSE}
\cSE \leq \frac{\omega}{\delta}.
\end{equation}
\end{theorem}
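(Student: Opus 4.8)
The plan is to diagonalize problem \eqref{eq_maxwell_weak} in the eigenbasis $\{\bphi_j\}_{j \geq 0}$ introduced in Section \ref{section_eigen} and to read off all three estimates from an explicit formula for the Fourier coefficients of $\be^\star(\bg)$. The only quantitative input will be the elementary bound on $|\lambda_j - \omega^2|$ furnished by the definition of $\delta$.

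First I would check that $\be^\star(\bg)$ is itself divergence-free, that is $\be^\star(\bg) \in \BH_0(\ccurl,\Omega) \cap \BH(\ddiv^0,\ee,\Omega)$, so that the spectral expansion and the Parseval identities recalled in Section \ref{section_eigen} actually apply to it. To this end I would test \eqref{eq_maxwell_weak} with $\bv = \grad \varphi$ for $\varphi \in H_0^1(\Omega)$: the curl term vanishes, the right-hand side $\omega(\ee\grad\varphi,\bg)$ vanishes because $\div(\ee\bg) = 0$, and what remains is $-\omega^2(\ee\grad\varphi,\be^\star(\bg)) = 0$, which is precisely $\div(\ee\be^\star(\bg)) = 0$ in the weak sense. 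This is the single place where the hypothesis $\bg \in \BH(\ddiv^0,\ee,\Omega)$ is used, and it is essential: without it the coefficients below would not capture the full $\ee$-norm of $\be^\star(\bg)$.

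Next I would expand $\bg = \sum_j g_j \bphi_j$ and $\be^\star(\bg) = \sum_j e_j \bphi_j$ with $g_j \eq (\ee\bg,\bphi_j)$ and $e_j \eq (\ee\be^\star(\bg),\bphi_j)$, and test \eqref{eq_maxwell_weak} with $\bv = \bphi_j$. Using the defining eigenrelation $(\cc\curl \bphi_j, \curl \be^\star(\bg)) = \lambda_j (\ee\bphi_j, \be^\star(\bg)) = \lambda_j e_j$ together with the orthonormality of the basis, the weak form collapses to the scalar identity $(\lambda_j - \omega^2) e_j = \omega g_j$, whence
\[
e_j = \frac{\omega g_j}{\lambda_j - \omega^2}.
\]
The three estimates then follow by inserting this formula into the Parseval identities $\|\be^\star(\bg)\|_{\ee,\Omega}^2 = \sum_j |e_j|^2$ and $\|\curl \be^\star(\bg)\|_{\cc,\Omega}^2 = \sum_j \lambda_j |e_j|^2$. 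The key inequality is $|\lambda_j - \omega^2| = |\sqrt{\lambda_j}-\omega|(\sqrt{\lambda_j}+\omega) \geq \delta(\sqrt{\lambda_j}+\omega)$, coming directly from the definition of $\delta$; combined with $(\sqrt{\lambda_j}+\omega)^2 \geq \omega^2 + \lambda_j \geq \max(\omega^2,\lambda_j)$ it yields, term by term, $\omega^2/(\lambda_j-\omega^2)^2 \leq 1/\delta^2$ and $\lambda_j \omega^2/(\lambda_j-\omega^2)^2 \leq \omega^2/\delta^2$, which give the two estimates in \eqref{eq_stability_estimates}. For \eqref{eq_estimate_cSE} the same inequality gives $(\omega^2+\lambda_j)\omega^2/(\lambda_j-\omega^2)^2 \leq \omega^2/\delta^2$, so that $\enorm{\be^\star(\bg)}^2 \leq (\omega/\delta)^2 \|\bg\|_{\ee,\Omega}^2$ uniformly in $\bg$, and taking the supremum over $\|\bg\|_{\ee,\Omega} = 1$ delivers the bound on $\cSE$.

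I do not anticipate a serious obstacle: this is standard spectral theory once the diagonalization is in place, and the summations are controlled by the single scalar inequality above applied mode by mode. The only step requiring genuine care is the preliminary verification that $\be^\star(\bg)$ is divergence-free, since otherwise the eigenexpansion would miss a gradient component and the Parseval identities would fail.
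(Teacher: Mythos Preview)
Your proof is correct and follows essentially the same spectral diagonalization as the paper: expand $\be^\star(\bg)$ and $\bg$ in the eigenbasis, test \eqref{eq_maxwell_weak} with $\bphi_j$ to obtain $(\lambda_j-\omega^2)e_j=\omega g_j$, and combine the inequality $|\lambda_j-\omega^2|\geq\delta(\sqrt{\lambda_j}+\omega)$ with the Parseval identities. The only difference is that you explicitly justify $\be^\star(\bg)\in\BH(\ddiv^0,\ee,\Omega)$ by testing with gradients, whereas the paper simply asserts this membership; your extra care here is appropriate.
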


\begin{proof}
Let $\bg \in \BH(\ddiv^0,\ee,\Omega)$ and set $\be \eq \be^\star(\bg)$.
Since $\be,\bg \in \BH(\ddiv^0,\ee,\Omega)$, we may expand $\be$ and $\bg$
in the basis $\{\bphi_j\}_{j \geq 0}$ by letting $e_j \eq (\be,\bphi_j)$
and $g_j \eq (\be,\bphi_j)$. Then, picking $\bv = \bphi_j$ in \eqref{eq_maxwell_weak},
we see that
\begin{equation*}
|e_j|
=
\frac{\omega}{|\lambda_j-\omega^2|} |g_j|
\leq
\frac{1}{\delta}\frac{\omega}{\sqrt{\lambda_j}+\omega} |g_j|\quad
\text{ and }
\quad\left (\omega + \sqrt{\lambda_j}\right )|e_j|
\leq
\frac{\omega}{\delta} |g_j|.
\end{equation*}
Then, \eqref{eq_stability_estimates} follows from
\begin{equation*}
\enorm{\be}^2
=
\sum_{j \geq 0}
(\omega^2 + \lambda_j)|e_j|^2
\leq
\sum_{j \geq 0}
\left ( (\omega + \sqrt{\lambda_j})|e_j|\right )^2
\leq
\left (\frac{\omega}{\delta}\right )^2 \|\bg\|_{\ee,\Omega}^2,
\end{equation*}
and \eqref{eq_estimate_cSE} follows from \eqref{eq_stability_estimates}
recalling the definition of $\cSE$ in \eqref{eq_def_cSE}. \qed
\end{proof}

\section{Regularity splitting}
\label{section_splitting}

In this section, we provide a regularity splitting result
that is solely expressed in terms of $\cSE$ and $\cD$%
\footnote{The authors believe it is of interest to explicitly mention $c_s$
proofs, since at least in principle, the regularity splitting results may apply in cases
where $c_s$ is not obtain via Theorem \ref{theorem_stability}.}. We start with a basic regularity result,
obtained by combining Theorem \ref{theorem_stability} with the regularity
shift results from Section \ref{section_shift}.

\begin{lemma}[Basic regularity]
\label{lemma_basic_regularity}
For all $\bg \in \BH(\ddiv^0,\ee,\Omega)$, we have
\begin{equation}
\label{eq_basic_reg_be}
\omega \|\be^\star(\bg)\|_{\ee,\BH^1(\LP)}
\leq
\cSE \cD \frac{\omega d_\Omega}{\vartheta_\Omega} \|\bg\|_{\ee,\Omega},
\end{equation}
and
\begin{equation}
\label{eq_basic_reg_curl_be}
\|\curl \be^\star(\bg) \|_{\cc,\BH^1(\LP)}
\leq
(1+\cSE) \cD \frac{\omega d_\Omega}{\vartheta_\Omega}
\|\bg\|_{\ee,\Omega}.
\end{equation}
\end{lemma}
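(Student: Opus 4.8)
The plan is to handle the two estimates separately, in each case reducing to a single regularity shift from Section~\ref{section_shift} combined with the stability bounds of Theorem~\ref{theorem_stability}. Write $\be \eq \be^\star(\bg)$ throughout. Before invoking the shifts I would record the two structural facts that make them applicable. First, $\be \in \BH(\ddiv^0,\ee,\Omega)$: testing \eqref{eq_maxwell_weak} with $\bv = \grad \phi$ for $\phi \in H^1_0(\Omega)$ annihilates the curl term and leaves $\omega(\ee\grad\phi,\be) = -(\ee\grad\phi,\bg) = 0$, the last equality because $\div(\ee\bg) = 0$; hence $\div(\ee\be) = 0$. Second, $\curl\be \in \BH_0(\ddiv^0,\Omega)$: its divergence vanishes identically, while its normal trace equals the surface divergence of the tangential trace $\be \times \bn$, which is zero since $\be \in \BH_0(\ccurl,\Omega)$.

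For \eqref{eq_basic_reg_be}, I would apply the shift \eqref{eq_shift_ee} to $\bv = \be$ with $m = 0$, noting that the norm $\|\cdot\|_{\cc,\BH^0(\LP)}$ reduces to $\|\cdot\|_{\cc,\Omega}$. This gives
\[
\|\be\|_{\ee,\BH^1(\LP)} \leq \cD \frac{d_\Omega}{\vartheta_\Omega} \|\curl\be\|_{\cc,\Omega}.
\]
By the definition \eqref{eq_def_cSE} of $\cSE$ and linearity of $\bg \mapsto \be^\star(\bg)$ one has $\|\curl\be\|_{\cc,\Omega} \leq \enorm{\be} \leq \cSE \|\bg\|_{\ee,\Omega}$, and multiplying through by $\omega$ yields \eqref{eq_basic_reg_be}.

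For \eqref{eq_basic_reg_curl_be}, the key preliminary step is to read off the strong form from \eqref{eq_maxwell_weak}. Integrating the curl term by parts against $\bv \in \BH_0(\ccurl,\Omega)$ and using the symmetry of $\cc$ shows that, distributionally, $\curl(\cc\curl\be) = \omega^2\ee\be + \omega\ee\bg = \omega\ee(\omega\be + \bg)$, which in particular lies in $\BL^2(\Omega)$. Applying $\zz = \ee^{-1}$ cancels the leading coefficient and gives the clean identity $\zz\curl(\cc\curl\be) = \omega(\omega\be + \bg)$. I would then apply the shift \eqref{eq_shift_cc} to $\bw = \curl\be$ with $m = 0$, obtaining
\[
\|\curl\be\|_{\cc,\BH^1(\LP)} \leq \cD \frac{d_\Omega}{\vartheta_\Omega} \|\zz\curl(\cc\curl\be)\|_{\ee,\Omega} = \cD \frac{d_\Omega}{\vartheta_\Omega}\, \omega \|\omega\be + \bg\|_{\ee,\Omega}.
\]
A triangle inequality together with the stability bound $\omega\|\be\|_{\ee,\Omega} \leq \cSE\|\bg\|_{\ee,\Omega}$ gives $\|\omega\be + \bg\|_{\ee,\Omega} \leq (1+\cSE)\|\bg\|_{\ee,\Omega}$, which is exactly \eqref{eq_basic_reg_curl_be}.

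The calculations are short, so I expect the only delicate points to be the two membership claims licensing the shifts --- especially verifying that $\curl\be$ has vanishing normal trace, so that $\curl\be \in \BH_0(\ddiv^0,\Omega)$ --- and the careful distributional identification of $\curl(\cc\curl\be)$ from the weak form. Once these are secured, the constants $\cSE\cD$ and $(1+\cSE)\cD$ fall out automatically.
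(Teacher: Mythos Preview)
Your proposal is correct and follows essentially the same route as the paper: apply the shift \eqref{eq_shift_ee} to $\be$ and bound $\|\curl\be\|_{\cc,\Omega}$ via $\cSE$ for \eqref{eq_basic_reg_be}, then apply \eqref{eq_shift_cc} to $\curl\be$, identify $\zz\curl(\cc\curl\be)=\omega(\omega\be+\bg)$ from the strong form, and use the triangle inequality plus $\omega\|\be\|_{\ee,\Omega}\leq\cSE\|\bg\|_{\ee,\Omega}$ for \eqref{eq_basic_reg_curl_be}. Your additional verification that $\be\in\BH(\ddiv^0,\ee,\Omega)$ and $\curl\be\in\BH_0(\ddiv^0,\Omega)$ is a welcome clarification that the paper leaves implicit.
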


\begin{proof}
Pick $\bg \in \BH(\ddiv^0,\ee,\Omega)$ and set $\be \eq \be^\star(\bg)$.
We first observe that as $\be \in \BH_0(\ccurl,\Omega) \cap \BH(\ddiv^0,\ee,\Omega)$,
shift estimate \eqref{eq_shift_ee} implies that
\begin{equation*}
\omega \|\be\|_{\ee,\BH^1(\LP)}
\leq
\cD \frac{\omega d_\Omega}{\vartheta_\Omega}
\|\curl \be\|_{\cc,\Omega}
\end{equation*}
and the definition of the stability constant in \eqref{eq_def_cSE} shows that
\begin{equation*}
\|\be\|_{\ee,\BH^1(\LP)}
\leq
\cD \frac{d_\Omega}{\vartheta_{\Omega}} \|\curl \be\|_{\cc,\Omega}
\leq
\cD \cSE \frac{d_\Omega}{\vartheta_{\Omega}}\|\bg\|_{\ee,\Omega},
\end{equation*}
so that \eqref{eq_basic_reg_be} follows. On the other hand,
we establish \eqref{eq_basic_reg_curl_be} with \eqref{eq_shift_cc}, since
\begin{equation*}
\|\curl \be\|_{\cc,\BH^1(\LP)}
\leq
\cD \frac{d_\Omega}{\vartheta_\Omega} \|\zz\curl(\cc \curl \be)\|_{\ee,\Omega}
\leq
\cD \frac{d_\Omega}{\vartheta_\Omega} \left (\omega \|\bg\|_{\ee,\Omega} + \omega^2 \|\be\|_{\ee,\Omega}\right ),
\end{equation*}
using \eqref{eq_def_cSE} to estimate the last term. \qed
\end{proof}

The regularity results presented in Lemma \ref{lemma_basic_regularity}
suffice to obtain sharp estimates for the approximation factor
when $p=0$. For high-order elements, however, this is not sufficient.
As we only have a limited regularity assumption for the right-hand side $\bg$
in definition \eqref{eq_gamma} of $\gamma$, we may not expect more regularity
than established in Lemma \ref{lemma_basic_regularity} for the associated solution
$\be^\star(\bg)$. As shown in
\cite{chaumontfrelet_nicaise_2019a,lafontaine_spence_wunsch_2020a,melenk_sauter_2011a}
for the Helmholtz equation, the key idea is to introduce a ``regularity splitting'' of the
solution. Here, we shall adapt the approach of \cite{chaumontfrelet_nicaise_2019a}
to Maxwell's equations and consider the formal expansion
\begin{equation}
\label{eq_formal_expansion}
\be^\star(\bg)
=
\sum_{j \geq 0} \left (\frac{\omega d_\Omega}{\vartheta_\Omega} \right )^j \be_j^\star(\bg).
\end{equation}
After identifying the powers of $(\omega d_\Omega/\vartheta_\Omega)$
in \eqref{eq_maxwell_strong}, one sees that $\be_0^\star(\bg) \eq \bo$,
and that the other elements
$\be_j^\star(\bg) \in \BH_0(\ccurl,\Omega) \cap \BH(\ddiv^0,\ee,\Omega)$
are iteratively defined through
\begin{subequations}
\label{eq_def_be}
\begin{equation}
\curl \left (\cc \curl \be_1^\star(\bg) \right )
=
\frac{\vartheta_\Omega}{d_\Omega} \ee \bg,
\end{equation}
and
\begin{equation}
\curl \left (\cc \curl \be_j^\star(\bg)\right )
=
\left (\frac{\vartheta_\Omega}{d_\Omega}\right )^2\ee \be_{j-2}^\star(\bg)
\end{equation}
\end{subequations}
for $j \geq 2$. Note that the boundary value problems in \eqref{eq_def_be}
are well-posed, since $\|\curl \cdot\ \!\|_{\cc,\Omega}$
is a norm on $\BH_0(\ccurl,\Omega) \cap \BH(\ddiv^0,\ee,\Omega)$.
We first show that the iterates in the sequence exhibit increasing regularity.

\begin{lemma}[Increasing regularity of the expansion]
\label{lemma_increasing_regularity}
Let $\bg \in \BH(\ddiv^0,\ee,\Omega)$. For all $0 \leq j \leq p$, we have
$\be_j^\star(\bg) \in \BH^{j+1}(\LP)$ and $\curl \be_j^\star(\bg) \in \BH^j(\LP)$ with
\begin{equation}
\label{eq_bej}
\omega \|\be_j^\star(\bg)\|_{\ee,\BH^{j+1}(\LP)}
\leq
\cD^{j+1} \frac{\omega d_\Omega}{\vartheta_\Omega} \|\bg\|_{\ee,\Omega},
\end{equation}
and
\begin{equation}
\label{eq_curl_bej}
\|\curl \be_j^\star(\bg)\|_{\cc,\BH^j(\LP)}
\leq
\cD^j \|\bg\|_{\ee,\Omega}.
\end{equation}
\end{lemma}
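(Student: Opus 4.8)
The plan is to argue by induction on $j$, exploiting the recursive structure of \eqref{eq_def_be}: since the forcing driving $\be_j^\star(\bg)$ involves $\be_{j-2}^\star(\bg)$, the natural induction advances in steps of two, with $j=0$ and $j=1$ as base cases and the inductive step at level $j$ drawing on the already-established estimates at level $j-2$. Each level is treated by first bounding the curl in the $\BH^j(\LP)$-norm via the $\cc$-shift \eqref{eq_shift_cc}, and then promoting this into the $\BH^{j+1}(\LP)$-bound on the field itself via the $\ee$-shift \eqref{eq_shift_ee}. The key algebraic feature that keeps the constants clean is the pointwise identity $\zz\ee = \BI$, which makes the right-hand sides collapse exactly.

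For the base cases, $j=0$ is immediate because $\be_0^\star(\bg)=\bo$, so both \eqref{eq_bej} and \eqref{eq_curl_bej} reduce to $0 \le \cdots$. For $j=1$, I would first note that $\curl \be_1^\star(\bg) \in \BH_0(\ddiv^0,\Omega)$, being the curl of a field in $\BH_0(\ccurl,\Omega)$, and apply \eqref{eq_shift_cc} with $m=0$. Using the defining relation $\curl(\cc\curl\be_1^\star(\bg)) = (\vartheta_\Omega/d_\Omega)\ee\bg$ together with $\zz\ee\bg = \bg$ collapses the right-hand side to $\cD\|\bg\|_{\ee,\Omega}$, giving \eqref{eq_curl_bej}. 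Feeding this into \eqref{eq_shift_ee} with $m=1$ then yields \eqref{eq_bej}.

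For the inductive step $j \ge 2$, I would apply \eqref{eq_shift_cc} to $\bw = \curl\be_j^\star(\bg)$ with $m = j-1$. The defining relation gives $\zz\curl(\cc\curl\be_j^\star(\bg)) = (\vartheta_\Omega/d_\Omega)^2\be_{j-2}^\star(\bg)$ exactly (again by $\zz\ee = \BI$), so the shift bounds $\|\curl\be_j^\star(\bg)\|_{\cc,\BH^j(\LP)}$ by $\cD(d_\Omega/\vartheta_\Omega)(\vartheta_\Omega/d_\Omega)^2\|\be_{j-2}^\star(\bg)\|_{\ee,\BH^{j-1}(\LP)}$. The induction hypothesis \eqref{eq_bej} at level $j-2$ controls the last factor by $\cD^{j-1}(d_\Omega/\vartheta_\Omega)\|\bg\|_{\ee,\Omega}$; the powers of $d_\Omega/\vartheta_\Omega$ cancel and the $\cD$'s combine to $\cD^j$, which is precisely \eqref{eq_curl_bej}. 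Applying \eqref{eq_shift_ee} with $m=j$ to this bound then produces \eqref{eq_bej} at level $j$, with one extra factor of $\cD\, d_\Omega/\vartheta_\Omega$.

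The steps that require care, rather than genuine difficulty, are the verification of the hypotheses of the two shift results and the tracking of the constants. On the qualitative side, one must check that each $\curl\be_j^\star(\bg)$ indeed belongs to $\BH_0(\ddiv^0,\Omega)$ and, crucially, that the right-hand sides of \eqref{eq_def_be} lie in $\BH^{j-1}(\LP)$ so that the shift estimates even apply; the latter is where the piecewise analyticity of $\ee$ enters, guaranteeing that $\ee\be_{j-2}^\star(\bg) \in \BH^{j-1}(\LP)$ whenever $\be_{j-2}^\star(\bg) \in \BH^{j-1}(\LP)$. On the quantitative side, the one feature to emphasize is that no Leibniz terms from differentiating the coefficients ever appear in the bounds: because the shift \eqref{eq_shift_cc} is phrased in terms of $\zz\curl(\cc\,\cdot)$ and the forcing in \eqref{eq_def_be} carries a factor $\ee$, the product $\zz\ee$ reduces to the identity before any norm is taken. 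This exact cancellation is what lets the constant remain the clean power $\cD^{j+1}$, and keeping the powers of $d_\Omega/\vartheta_\Omega$ balanced throughout the two-step recursion is the only real bookkeeping.
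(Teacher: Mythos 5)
Your proposal is correct and follows essentially the same route as the paper: a two-step induction with base cases $j=0,1$, bounding $\curl\be_j^\star(\bg)$ via the shift \eqref{eq_shift_cc} combined with the exact cancellation $\zz\ee=\BI$ in \eqref{eq_def_be}, then promoting to the bound on $\be_j^\star(\bg)$ itself via \eqref{eq_shift_ee}. The only cosmetic difference is that the paper runs the induction on \eqref{eq_curl_bej} alone and deduces \eqref{eq_bej} for all $j$ at the end, whereas you carry both estimates through the induction; the chain of inequalities is identical.
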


\begin{proof}
Let $\bg \in \BH(\ddiv^0,\ee,\Omega)$. To ease the presentation, we set
$\be_j \eq \be_j^\star(\bg)$ for $j \geq 0$.
We start with \eqref{eq_curl_bej}. It obviously holds for $j = 0$
as $\be_0 \eq \bo$. For $j=1$, recalling \eqref{eq_def_be}, we have
\begin{equation*}
\|\curl \be_1\|_{\cc,\BH^1(\LP)}
\leq
\cD \frac{d_\Omega}{\vartheta_\Omega} \|\curl \left (\cc\curl \be_1\right )\|_{\zz,\Omega}
=
\cD \|\ee \bg\|_{\zz,\Omega}
=
\cD \|\bg\|_{\ee,\Omega}.
\end{equation*}
Then, assuming that \eqref{eq_curl_bej} holds up to some $j$,
\eqref{eq_shift_cc} and \eqref{eq_def_be} reveal that
\begin{multline*}
\|\curl \be_{j+2}\|_{\cc,\BH^{j+2}(\LP)}
\leq
\cD \frac{d_\Omega}{\vartheta_\Omega} \|\zz\curl (\cc \curl \be_{j+2})\|_{\ee,\BH^{j+1}(\LP)}
=
\cD \frac{\vartheta_\Omega}{d_\Omega} \|\be_j\|_{\ee,\BH^{j+1}(\LP)}
\\
\leq
\cD^2 \|\curl \be_j\|_{\cc,\BH^{j+1}(\LP)}
\leq
\cD^{j+2} \|\bg\|_{\ee,\Omega},
\end{multline*}
and \eqref{eq_curl_bej} follows by induction.

On the other hand, \eqref{eq_bej} is a direct consequence
of \eqref{eq_curl_bej}, since \eqref{eq_shift_ee} shows that
\begin{equation*}
\|\be_j\|_{\ee,\BH^{j+1}(\LP)}
\leq
\cD \frac{d_\Omega}{\vartheta_\Omega} \|\curl \be_j\|_{\cc,\BH^j(\LP)}
\leq
\frac{d_\Omega}{\vartheta_\Omega}
\cD^{j+1} \|\bg\|_{\ee,\Omega}.
\qquad \qed
\end{equation*}
\end{proof}

So far, expansion \eqref{eq_formal_expansion} is only formal, and we need to
truncate the expansion into a finite sum. To do so, we introduce,
for $\ell \geq 0$, the ``residual'' term
\begin{equation*}
\br_\ell^\star(\bg)
\eq
\be^\star(\bg)
-
\sum_{j=0}^\ell \left (\frac{\omega d_\Omega}{\vartheta_{\Omega}}\right )^j \be_j^\star(\bg)
\in
\BH_0(\ccurl,\Omega) \cap \BH(\ddiv^0,\ee,\Omega),
\end{equation*}
so that
\begin{equation}
\label{eq_finite_expansion}
\be^\star(\bg) = \sum_{j=0}^\ell
\left (
\frac{\omega d_\Omega}{\vartheta_\Omega}
\right )^j
\be_j^\star(\bg) + \br_\ell^\star(\bg).
\end{equation}
As we show next, these residuals have increasing regularity.

\begin{lemma}[Regularity of residual terms]
\label{lemma_residual_regularity}
For all $\bg \in \BH(\ddiv^0,\ee,\Omega)$ and $0 \leq \ell \leq p$, we have
$\br_\ell^\star(\bg) \in \BH^{\ell+1}(\LP)$ and $\curl \br_\ell^\star(\bg) \in \BH^{\ell+1}(\LP)$
with the estimates
\begin{equation}
\label{eq_br}
\omega \|\br_\ell^\star(\bg)\|_{\ee,\BH^{\ell+1}(\LP)}
\leq
\cSE \left (\cD \frac{\omega d_\Omega}{\vartheta_\Omega}\right )^{\ell+1}
\|\bg\|_{\ee,\Omega},
\end{equation}
and
\begin{equation}
\label{eq_curl_br}
\|\curl \br_\ell^\star(\bg)\|_{\cc,\BH^{\ell+1}(\LP)}
\leq
(1+\cSE) \left (\cD \frac{\omega d_\Omega}{\vartheta_\Omega} \right )^{\ell+1}
\|\bg\|_{\ee,\Omega}.
\end{equation}
\end{lemma}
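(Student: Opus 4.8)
The plan is to derive a boundary-value problem satisfied by the residual and then bootstrap its regularity by a two-step induction on $\ell$. Writing $\kappa \eq \omega d_\Omega/\vartheta_\Omega$ for brevity and recalling that $\be^\star(\bg)$ solves the strong form of \eqref{eq_maxwell_strong}, that is $\curl(\cc\curl\be^\star(\bg)) = \omega\ee\bg + \omega^2\ee\be^\star(\bg)$, while the iterates satisfy \eqref{eq_def_be}, I would first apply $\curl(\cc\curl\,\cdot\,)$ to the truncated sum and match the powers of $\kappa$. The term $j=1$ reproduces $\omega\ee\bg$, and reindexing the terms $j \geq 2$ collapses the remaining sum to $\omega^2\ee$ times the truncation of order $\ell-2$. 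Subtracting from the equation for $\be^\star(\bg)$ then yields, for $\ell \geq 2$, the clean recursion
\begin{equation*}
\curl(\cc\curl\br_\ell^\star(\bg)) = \omega^2 \ee\, \br_{\ell-2}^\star(\bg),
\end{equation*}
with $\br_\ell^\star(\bg) \in \BH_0(\ccurl,\Omega)\cap\BH(\ddiv^0,\ee,\Omega)$. This identity is the engine of the proof: it shows the residual is smoother than either $\be^\star(\bg)$ or the partial sum alone, the roughest contributions having cancelled.

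For the base cases I would treat $\ell = 0$ and $\ell = 1$ directly. Since $\be_0^\star(\bg) = \bo$, we have $\br_0^\star(\bg) = \be^\star(\bg)$, so \eqref{eq_br} and \eqref{eq_curl_br} at $\ell = 0$ are exactly \eqref{eq_basic_reg_be} and \eqref{eq_basic_reg_curl_be} of Lemma~\ref{lemma_basic_regularity}. For $\ell = 1$ the same matching of powers gives $\curl(\cc\curl\br_1^\star(\bg)) = \omega^2\ee\,\be^\star(\bg)$; the double shift \eqref{eq_double_shift} with $m=0$ and the stability bound \eqref{eq_estimate_cSE} then control $\omega\|\br_1^\star(\bg)\|_{\ee,\BH^2(\LP)}$, while the single shift \eqref{eq_shift_cc} with $m=1$ together with \eqref{eq_basic_reg_be} controls $\|\curl\br_1^\star(\bg)\|_{\cc,\BH^2(\LP)}$, each producing the claimed power $(\cD\kappa)^2$ up to the factors $\cSE$ and $1+\cSE$ (using $\cSE \leq 1+\cSE$).

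For the inductive step ($\ell \geq 2$) I would assume \eqref{eq_br} and \eqref{eq_curl_br} at order $\ell-2$ and exploit the residual recursion. The field estimate \eqref{eq_br} is the easy half: the double shift \eqref{eq_double_shift} with $m = \ell-1$ applies because $\zz\curl(\cc\curl\br_\ell^\star(\bg)) = \omega^2\br_{\ell-2}^\star(\bg)$, so its right-hand side equals $(\cD d_\Omega/\vartheta_\Omega)^2\,\omega^2\,\|\br_{\ell-2}^\star(\bg)\|_{\ee,\BH^{\ell-1}(\LP)}$. Inserting the inductive bound for $\omega\|\br_{\ell-2}^\star(\bg)\|_{\ee,\BH^{\ell-1}(\LP)}$ and using $(\cD d_\Omega/\vartheta_\Omega)^2\omega^2 = (\cD\kappa)^2$ turns this into $\cSE\,(\cD\kappa)^{\ell+1}\|\bg\|_{\ee,\Omega}$, as claimed.

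The main obstacle is the curl estimate \eqref{eq_curl_br}, and it is where the two halves of the inductive hypothesis must be combined. The single shift \eqref{eq_shift_cc} with $m = \ell$ gives $\|\curl\br_\ell^\star(\bg)\|_{\cc,\BH^{\ell+1}(\LP)} \leq \cD (d_\Omega/\vartheta_\Omega)\,\omega^2\,\|\br_{\ell-2}^\star(\bg)\|_{\ee,\BH^{\ell}(\LP)}$, which demands $\br_{\ell-2}^\star(\bg) \in \BH^{\ell}(\LP)$ — one derivative more than the \emph{field} part of the hypothesis provides. I would close this gap by first upgrading $\br_{\ell-2}^\star(\bg)$ via \eqref{eq_shift_ee} with $m = \ell-1$, namely $\|\br_{\ell-2}^\star(\bg)\|_{\ee,\BH^{\ell}(\LP)} \leq \cD(d_\Omega/\vartheta_\Omega)\,\|\curl\br_{\ell-2}^\star(\bg)\|_{\cc,\BH^{\ell-1}(\LP)}$, and then invoking the \emph{curl} part \eqref{eq_curl_br} of the hypothesis at order $\ell-2$. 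Chaining the two shifts yields exactly $(1+\cSE)\,(\cD\kappa)^{\ell+1}\|\bg\|_{\ee,\Omega}$. Throughout, the restriction $\ell \leq p$ is precisely what keeps every use of \eqref{eq_shift_ee}, \eqref{eq_shift_cc} and \eqref{eq_double_shift} within its admissible range of $m$, so no regularity beyond that guaranteed by the piecewise-analytic coefficients is ever invoked.
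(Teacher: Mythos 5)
Your proof is correct and follows essentially the same route as the paper's: the same residual recursion $\curl(\cc\curl\br_{\ell}^\star(\bg)) = \omega^2\ee\,\br_{\ell-2}^\star(\bg)$, the same base cases $\ell=0,1$ resting on Lemma~\ref{lemma_basic_regularity}, and the same two-step induction in which the curl estimate is closed by chaining \eqref{eq_shift_cc} with \eqref{eq_shift_ee} and the curl part of the inductive hypothesis. One trivial citation slip: in the $\ell=1$ case the bound $\omega\|\be^\star(\bg)\|_{\ee,\Omega}\le\cSE\|\bg\|_{\ee,\Omega}$ comes from the definition \eqref{eq_def_cSE} of $\cSE$, not from the estimate \eqref{eq_estimate_cSE}, which would instead put $\omega/\delta$ in place of $\cSE$.
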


\begin{proof}
For the sake of simplicity, we fix $\bg \in \BH(\ddiv^0,\ee,\Omega)$,
and set $\be \eq \be^\star(\bg)$ and $\br_\ell \eq \br_\ell^\star(\bg)$ for $\ell \geq 0$.
We have $\br_0 \eq \be$, so that \eqref{eq_br} and \eqref{eq_curl_br} hold for $\ell=0$
as a direct consequence of \eqref{eq_basic_reg_be} and \eqref{eq_basic_reg_curl_be}.

For the case $\ell=1$, simple computations show that
$\curl (\cc \curl \br_1) = \omega^2 \ee \be$.
Using \eqref{eq_shift_cc} and \eqref{eq_double_shift}, it then follows
that
\begin{align*}
\omega \|\br_1\|_{\ee,\BH^2(\LP)}
\leq
\cD^2 \frac{\omega d_\Omega^2}{\vartheta_\Omega^2} \|\zz\curl (\cc \curl \br_1)\|_{\ee,\Omega}
=
\cD^2 \left (\frac{\omega d_\Omega}{\vartheta_\Omega}\right )^2 \omega\|\be\|_{\ee,\Omega}
\end{align*}
and
\begin{equation*}
\|\curl \br_1\|_{\cc,\BH^2(\LP)}
\leq
\cD
\frac{d_\Omega}{\vartheta_\Omega} \|\zz\curl (\cc\curl \br_1)\|_{\ee,\BH^1(\LP)}
=
\cD \frac{\omega d_\Omega}{\vartheta_\Omega} \omega \|\be\|_{\ee,\BH^1(\LP)}
\end{equation*}
so that \eqref{eq_br} and \eqref{eq_curl_br} are also valid when $\ell=1$
recalling \eqref{eq_def_cSE} and \eqref{eq_basic_reg_be}.

For the general case, we first observe that
$\curl (\cc \curl \br_{\ell+2}) = \omega^2 \ee \br_\ell$.
Therefore, using \eqref{eq_shift_cc} and \eqref{eq_double_shift}, we have
\begin{equation*}
\omega\|\br_{\ell+2}\|_{\ee,\BH^{\ell+3}(\LP)}
\leq
\left (\cD\frac{\omega d_\Omega}{\vartheta_\Omega}\right )^2
\omega \|\br_\ell\|_{\ee,\BH^{\ell+1}(\LP)},
\end{equation*}
and
\begin{align*}
\|\curl \br_{\ell+2}\|_{\cc,\BH^{\ell+3}(\LP)}
&\leq
\cD \frac{d_\Omega}{\vartheta_\Omega}
\|\zz \curl (\cc \curl \br_{\ell+2})\|_{\ee,\BH^{\ell+2}(\LP)}
\\
&\leq
\cD \frac{d_\Omega}{\vartheta_\Omega} \omega^2 \|\br_{\ell}\|_{\ee,\BH^{\ell+2}(\LP)}
\leq
\left (\cD \frac{\omega d_\Omega}{\vartheta_\Omega}\right )^2
\|\curl \br_\ell\|_{\cc,\BH^{\ell+1}(\LP)},
\end{align*}
and the general case follows by induction. \qed
\end{proof}

\section{Sharp approximability estimates}
\label{section_approximability}

Equipped with the regularity splitting from Section \ref{section_splitting},
we are now ready to establish our main result, providing an upper bound for
the approximation factor $\gbaE$.

\begin{theorem}[Approximability estimate]
\label{theorem_approximability}
Assume that $\cD (\omega h/\vartheta_\Omega) \leq 1/2$. Then, the following estimate holds true
\begin{equation*}
\gbaE
\leq
\cI
\left (
2 \sqrt{2} \cD \frac{\omega h}{\vartheta_\Omega}
+
\sqrt{1+2\cSE^2}
\left (\cD\frac{\omega h}{\vartheta_\Omega}\right )^{p+1}
\right ).
\end{equation*}
\end{theorem}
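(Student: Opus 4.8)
The plan is to insert the truncated regularity splitting \eqref{eq_finite_expansion} at the level $\ell = p$,
\[
\be^\star(\bg) = \sum_{j=1}^{p}\left(\frac{\omega d_\Omega}{\vartheta_\Omega}\right)^j\be_j^\star(\bg) + \br_p^\star(\bg)
\]
(the sum starting at $j=1$ since $\be_0^\star(\bg)=\bo$), and to build a competitor in $\BW_h$ by interpolating each summand. First I would set
\[
\bv_h \eq \sum_{j=1}^{p}\left(\frac{\omega d_\Omega}{\vartheta_\Omega}\right)^j\CJ_h\be_j^\star(\bg) + \CJ_h\br_p^\star(\bg) \in \BW_h,
\]
which is well defined because Lemma \ref{lemma_increasing_regularity} and Lemma \ref{lemma_residual_regularity} guarantee $\be_j^\star(\bg)\in\BH^{j+1}(\LP)\cap\BH_0(\ccurl,\Omega)$ and $\br_p^\star(\bg)\in\BH^{p+1}(\LP)\cap\BH_0(\ccurl,\Omega)$, so that $\CJ_h$ applies to each term. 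Since $\gbaE$ is an infimum over $\BW_h$, it then suffices to bound $\enorm{\be^\star(\bg)-\bv_h}$ and to take the supremum over $\bg$ with $\|\bg\|_{\ee,\Omega}=1$.

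Next I would estimate the ``regular'' contribution $\sum_{j=1}^{p}(\omega d_\Omega/\vartheta_\Omega)^j(\be_j^\star(\bg)-\CJ_h\be_j^\star(\bg))$ term by term, applying the interpolation bounds \eqref{eq_interpolation} at the order dictated by the regularity actually available for each iterate, namely order $j+1$ for the field $\be_j^\star(\bg)\in\BH^{j+1}(\LP)$ and order $j$ for $\curl\be_j^\star(\bg)\in\BH^{j}(\LP)$ (the natural lower-order counterparts of \eqref{eq_interpolation}). Combined with \eqref{eq_bej}--\eqref{eq_curl_bej}, the prefactor $(\omega d_\Omega/\vartheta_\Omega)^j$ combines with the interpolation rate $(h/d_\Omega)^{j}$ and the constant $\cD^{j}$ to produce exactly $(\cD\,\omega h/\vartheta_\Omega)^{j}$ for the $\cc$-weighted curl part and one higher power for the $\ee$-weighted field part. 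The hypothesis $\cD(\omega h/\vartheta_\Omega)\le 1/2$ then lets me sum these as a geometric series dominated by its $j=1$ term, yielding a contribution of size $\sim\cI\,\cD(\omega h/\vartheta_\Omega)$; crucially, because \eqref{eq_bej}--\eqref{eq_curl_bej} involve only $\cD$, this first term carries no stability factor $\cSE$.

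For the residual I would apply \eqref{eq_interpolation} at the full order $p+1$ to $\br_p^\star(\bg)$, using \eqref{eq_br}--\eqref{eq_curl_br}: the $\ee$-weighted and $\cc$-weighted errors scale like $\cSE(\cD\,\omega h/\vartheta_\Omega)^{p+1}$ and $(1+\cSE)(\cD\,\omega h/\vartheta_\Omega)^{p+1}$ respectively, and assembling them in the energy norm delivers the second term, with the stated factor $\sqrt{1+2\cSE^2}$ once the field and curl contributions are collected carefully. A final triangle inequality splitting $\be^\star(\bg)-\bv_h$ into its regular part and its residual, followed by the normalization $\|\bg\|_{\ee,\Omega}=1$, then gives the claim.

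The main obstacle is the tension between the low regularity of the early iterates and the wish to exploit high-order polynomials: each $\be_j^\star(\bg)$ is only $\BH^{j+1}(\LP)$, so none of the low-order terms can individually be approximated at the optimal rate $h^{p+1}$. The resolution, and the heart of the argument, is that the small parameter $\cD(\omega h/\vartheta_\Omega)$ turns the term-by-term errors into a convergent geometric series whose sum is governed by the single worst ($j=1$) term, while the stability-sensitive factor $\cSE$ is confined to the genuinely smooth residual $\br_p^\star(\bg)$ and hence only multiplies the high-order power $(\omega h/\vartheta_\Omega)^{p+1}$. Compared with the scalar Helmholtz analysis of \cite{chaumontfrelet_nicaise_2019a}, the extra burden is vectorial: one must simultaneously control the field in the $\ee$-weighted norm and its curl in the $\cc$-weighted norm, drawing on both estimates in \eqref{eq_interpolation} and both regularity bounds in each lemma, which is precisely why the residual's curl carries the additional $(1+\cSE)$ factor.
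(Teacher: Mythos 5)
Your proposal is correct and follows essentially the same route as the paper's own proof: truncate the splitting at $\ell = p$, interpolate each iterate $\be_j^\star(\bg)$ at the order its regularity permits (order $j+1$ for the field, $j$ for the curl) and the residual $\br_p^\star(\bg)$ at the full order $p+1$, then sum the resulting geometric series in $\cD\,\omega h/\vartheta_\Omega$ under the hypothesis $\cD(\omega h/\vartheta_\Omega) \leq 1/2$, confining $\cSE$ to the $(p+1)$-st power term. The only cosmetic difference is that you build the competitor by interpolating each summand separately, whereas the paper applies $\CJ_h$ to $\be^\star(\bg)$ and exploits linearity of the interpolation operator; the two competitors coincide.
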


\begin{proof}
We consider a right-hand side $\bg \in \BH(\ddiv^0,\ee,\Omega)$
and employ the notation $\be \eq \be^\star(\bg)$, $\be_j \eq \be_j^\star(\bg)$
for $j \geq 0$ and $\br_p \eq \br_p^\star(\bg)$.
Recalling \eqref{eq_gamma}
and the finite expansion \eqref{eq_finite_expansion} for $\be$, it is sufficient
to provide upper bounds for the high-order interpolation error of $\be_j$ and $\br_\ell$.
For $ \be_j $, 
\eqref{eq_interpolation} and Lemma \ref{lemma_increasing_regularity} imply that
\begin{align*}
\omega \left (\frac{\omega d_\Omega}{\vartheta_\Omega}\right )^j
\|\be_j-\CJ_h \be_j\|_{\ee,\Omega}
&\leq
\cI
\omega \left (\frac{\omega d_\Omega}{\vartheta_\Omega}\right )^j
\left (\frac{h}{d_\Omega}\right )^{j+1} \|\be_j\|_{\ee,\BH^{j+1}(\LP)}
\\
&=
\cI
\frac{\vartheta_\Omega}{d_\Omega}
\left (\frac{\omega h}{\vartheta_\Omega}\right )^{j+1}
\|\be_j\|_{\ee,\BH^{j+1}(\LP)}
\leq
\cI
\left (\cD \frac{\omega h}{\vartheta_\Omega}\right )^{j+1} \|\bg\|_{\ee,\Omega},
\end{align*}
and
\begin{equation*}
\left (\frac{\omega d_\Omega}{\vartheta_\Omega}\right )^j
\|\curl (\be_j-\CJ_h \be_j)\|_{\cc,\Omega}
\leq
\cI
\left (\frac{\omega d_\Omega}{\vartheta_\Omega}\right )^j
\left (\frac{h}{d_\Omega}\right )^j \|\curl \be_j\|_{\cc,\BH^{j}(\LP)}
\leq
\cI
\left ( \cD \frac{\omega h}{\vartheta_\Omega} \right )^j \|\bg\|_{\ee,\Omega},
\end{equation*}
and since $\cD (\omega h/\vartheta_\Omega) \leq 1$, we get
\begin{equation*}
\left (\frac{\omega d_\Omega}{\vartheta_\Omega}\right )^j
\enorm{\be_j-\CJ_h \be_j}
\leq
\cI
\sqrt{2} \left ( \cD \frac{\omega h}{\vartheta_\Omega} \right )^j \|\bg\|_{\ee,\Omega}.
\end{equation*}

Similarly, using to Lemma \ref{lemma_residual_regularity},
we have for the residual $\br_p$
\begin{align*}
\omega \|\br_p-\CJ_h \br_p\|_{\ee,\Omega}
\leq
\cI \left ( \frac{h}{d_\Omega} \right )^{p+1} \omega\|\br_k\|_{\ee,\BH^{p+1}(\LP)}
\leq
\cI \cSE \left (\cD \frac{\omega h}{\vartheta_\Omega} \right )^{p+1} \|\bg\|_{\ee,\Omega}
\end{align*}
and
\begin{align*}
\|\curl (\br_p-\CJ_h \br_p)\|_{\cc,\Omega}
\leq
\cI
\left (\frac{h}{d_\Omega}\right )^{p+1} \|\curl \br_k\|_{\cc,\BH^{p+1}(\LP)}
\leq
\cI
(1+\cSE)
\left ( \cD \frac{\omega h}{\vartheta_\Omega} \right )^{p+1}
\|\bg\|_{\ee,\Omega},
\end{align*}
and hence
\begin{equation*}
\enorm{\br_p-\CJ_h \br_p}
\leq
\cI \sqrt{1+2\cSE^2}
\left ( \cD \frac{\omega h}{\vartheta_\Omega} \right )^{p+1}
\|\bg\|_{\ee,\Omega}.
\end{equation*}

Then, recalling the expansion \eqref{eq_finite_expansion}, the above estimates show that
\begin{equation*}
\enorm{\be - \CJ_h \be}
\leq
\cI \left (
\sqrt{2}
\sum_{j=1}^p \left (\cD \frac{\omega h}{\vartheta_\Omega}\right )^j
+
\sqrt{1+2\cSE^2} \left ( \cD \frac{\omega h}{\vartheta_\Omega} \right )^{p+1}
\right )
\|\bg\|_{\ee,\Omega}.
\end{equation*}
Finally, the result follows by
\begin{align*}
\sum_{j=1}^p \left (\cD \frac{\omega h}{\vartheta_\Omega}\right )^j
=
\left (\cD \frac{\omega h}{\vartheta_\Omega}\right )\sum_{j=0}^{p-1} \left (\cD \frac{\omega h}{\vartheta_\Omega}\right )^j
&=
\left (\cD \frac{\omega h}{\vartheta_\Omega}\right )
\frac{1 - \left (\cD \frac{\omega h}{\vartheta_\Omega}\right )^p}
{1 - \left (\cD \frac{\omega h}{\vartheta_\Omega}\right )}
\leq
2\cD \frac{\omega h}{\vartheta_\Omega}.
\end{align*}
\qed
\end{proof}

We conclude our work with a simplified version of Theorem \ref{theorem_approximability}
that is easier to read, but not as explicit in how the estimate depends on $\cSE$, $\cD$
and $\cI$. We skip the proof as it immediately follows from Theorems \ref{theorem_stability}
and \ref{theorem_approximability}.

\begin{corollary}[Simplified approximability estimate]
\label{corollary_approximability}
There exist positive constants $c$ and $C$ solely depending on $\cSE$, $\cD$ and $\cI$
such that whenever $\omega h/\vartheta_\Omega \leq c$, we have
\begin{equation*}
\gbaE
\leq
C \left (
\frac{\omega h}{\vartheta_\Omega}
+
\frac{\omega}{\delta}\left (\frac{\omega h}{\vartheta_\Omega}\right )^{p+1}
\right ).
\end{equation*}
\end{corollary}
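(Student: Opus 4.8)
The plan is to read the corollary off directly from the two quantitative results already proved, namely the approximability bound of Theorem~\ref{theorem_approximability} and the stability bound $\cSE \leq \omega/\delta$ of Theorem~\ref{theorem_stability}. I would choose the threshold $c \eq \min\{1,1/(2\cD)\}$, so that the constraint $\omega h/\vartheta_\Omega \leq c$ simultaneously guarantees the hypothesis $\cD(\omega h/\vartheta_\Omega) \leq 1/2$ of Theorem~\ref{theorem_approximability} and the normalization $\omega h/\vartheta_\Omega \leq 1$ needed below. With this choice, Theorem~\ref{theorem_approximability} gives
\[
\gbaE \leq \cI\left(2\sqrt{2}\,\cD\frac{\omega h}{\vartheta_\Omega} + \sqrt{1+2\cSE^2}\left(\cD\frac{\omega h}{\vartheta_\Omega}\right)^{p+1}\right).
\]

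The first summand is already proportional to $\omega h/\vartheta_\Omega$, hence of the first form in the target estimate. For the second summand the task is to eliminate the factor $\sqrt{1+2\cSE^2}$; here I would use the elementary inequality $\sqrt{a+b} \leq \sqrt{a}+\sqrt{b}$ together with Theorem~\ref{theorem_stability} to obtain $\sqrt{1+2\cSE^2} \leq 1 + \sqrt{2}\,\cSE \leq 1 + \sqrt{2}\,(\omega/\delta)$. Distributing this over the power term then splits the second summand into a ``pure power'' contribution $\cI\,(\cD\,\omega h/\vartheta_\Omega)^{p+1}$ and a contribution $\sqrt{2}\,\cI\,(\omega/\delta)\,(\cD\,\omega h/\vartheta_\Omega)^{p+1}$, the latter being already of the second form in the corollary.

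The only place the smallness hypothesis is genuinely used is in handling the pure power contribution: since $\omega h/\vartheta_\Omega \leq c \leq 1$ and $p \geq 0$, one has $(\omega h/\vartheta_\Omega)^{p+1} \leq \omega h/\vartheta_\Omega$, so that $\cI\cD^{p+1}(\omega h/\vartheta_\Omega)^{p+1}$ is absorbed into the linear term. Collecting the three contributions proves the claim with, for instance, $C \eq \cI\bigl(2\sqrt{2}\,\cD + (1+\sqrt{2})\,\cD^{p+1}\bigr)$. I do not expect any real obstacle, as the argument is purely algebraic; the only conceptual point to flag is that $C$ carries the factor $\cD^{p+1}$ and therefore depends implicitly on the polynomial degree $p$. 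This is exactly the limitation anticipated in the introduction and is unavoidable here, since the dependence of $\cD$ on $p$ is not quantified.
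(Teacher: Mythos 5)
Your proof is correct and takes exactly the route the paper intends: the paper skips the proof of this corollary, stating that it follows immediately from Theorems \ref{theorem_stability} and \ref{theorem_approximability}, and your argument is precisely that derivation --- bounding $\sqrt{1+2\cSE^2} \leq 1+\sqrt{2}\,\cSE \leq 1+\sqrt{2}\,\omega/\delta$ and absorbing the $\cSE$-free power term into the linear term via the smallness hypothesis. Your explicit choices $c = \min\{1,1/(2\cD)\}$ and $C = \cI\bigl(2\sqrt{2}\,\cD + (1+\sqrt{2})\,\cD^{p+1}\bigr)$ are valid, and your closing remark correctly identifies that the $p$-dependence enters only through $\cD$ (and the exponent), consistent with the limitation acknowledged in the introduction.
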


\bibliographystyle{amsplain}
\bibliography{bibliography}

\end{document}